\title[Lyapunov exponents in families]{An asymptotic for sums of Lyapunov exponents in families}
\author{Patrick Ingram}
\address{Department of Mathematics and Statistics, York University, Toronto, Canada}
\email{pingram@yorku.ca}
\author{David Jaramillo-Martinez} 
\address{Department of Mathematics and Statistics, York University, Toronto, Canada}
\email{djm738@yorku.ca}
\author{Jorge Mello}
\address{Department of Mathematics and Statistics, Oakland University, Rochester Hills, MI, USA}
\email{jorgedemellojr@oakland.edu}
\date{\today}
\newcommand{\PP}{\mathbb{P}}
\newcommand{\CC}{\mathbb{C}}
\newcommand{\QQ}{\mathbb{Q}}
\newtheorem{theorem}{Theorem}
\newtheorem{lemma}[theorem]{Lemma}
\theoremstyle{definition}
\newtheorem*{remark}{Remark}
\begin{document}
\nocite{*}
\begin{abstract}
Let $f_t$ be a meromorphic family of endomorphisms of $\PP^N_\CC$ of degree at least 2, and let $L(f_t)$ be the sum of Lyapunov exponents associated to $f_t$. Favre showed that
\[L(f_t)=L(f_\eta)\log|t^{-1}|+o(\log|t^{-1}|)\]
as $t\to 0$, where $L(f_\eta)$ is the sum of Lyapunov exponents on the generic fibre, interpreted as an endomorphism of some projective Berkovich space. Under some additional constraints on the family $f_t$, we provide an explicit error term.
\end{abstract}
\maketitle

\section{Introduction}

The sum of Lyapunov exponents of $f:\PP^N_\CC\to\PP^N_\CC$ quantifies the average rate of expansion, and is defined by the integral
\[L(f)=\int \log\|\det(Df)\|d\mu,\]
where $\mu$ is the measure of maximal entropy associated to $f$, and $\|\cdot\|$ is the norm in terms of the Fubini-Study metric.
From work of  Bassanelli and Bertoloot \cite{BB}, it is known that $L(f)$ varies nicely in holomorphic families, and so it is natural to ask what happens as a meromorphic family degenerates to a pole. As a sample application of a deeper result on degeneration of measures on hybrid spaces, Favre proved the following.

\begin{theorem}(Favre, \cite[Theorem C]{F})
Let $f_t$ be a meromorphic family of endomorphisms of $\PP^N$ parametrized by the unit disk, and let $f_\eta$ be the endomorphism induced by this family by the Berkovich space associated to $\CC(\!( t )\!)$. Then
\[L(f_t)=L(f_\eta)\log|t^{-1}|+o(\log|t^{-1}|),\]
as $t\to 0$, where $o(x)/x\to 0$ as $x\to 0$.
\end{theorem}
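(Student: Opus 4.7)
The plan is to follow Favre's hybrid-space approach, which puts the complex fibers and the non-Archimedean fiber at $t=0$ into a single topological space on which $f_t$ varies continuously. First I would invoke the hybrid construction of Boucksom--Jonsson: assemble a topological space $\PP^{N,\mathrm{hyb}}$ fibered over the closed unit disk whose fiber at $t\neq 0$ is $\PP^N_\CC$ carrying the Fubini--Study metric rescaled by $1/\log|t^{-1}|$, and whose central fiber is the Berkovich analytification $\PP^{N,\mathrm{an}}_{\CC(\!(t)\!)}$. This rescaling is precisely calibrated so that for a rational function $g$, the Archimedean quantity $|g|_t^{1/\log|t^{-1}|}$ tends to the non-Archimedean norm $|g|_\eta$ as $t\to 0$. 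Since $f_t$ is meromorphic at $t=0$, clearing denominators exhibits it as a continuous self-map of $\PP^{N,\mathrm{hyb}}$ whose restriction to the central fiber is $f_\eta$.

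Next I would apply the key dynamical input, an equidistribution statement on hybrid spaces: the maximal-entropy measures $\mu_{f_t}$ of the complex fibers, viewed as measures on $\PP^{N,\mathrm{hyb}}$, converge weakly to the Berkovich equilibrium measure $\mu_{f_\eta}$ of $f_\eta$. The route would be to show that the dynamical Green functions $G_{f_t}$, after the $\log|t^{-1}|$-rescaling, converge as (quasi-)psh functions on the hybrid space to the non-Archimedean Green function $G_{f_\eta}$, and then take $(dd^c)^N$ to pass to measures.

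To extract the Lyapunov asymptotic I would write
\[\frac{L(f_t)}{\log|t^{-1}|}=\int \frac{\log\|\det Df_t\|}{\log|t^{-1}|}\,d\mu_{f_t}\]
and identify the rescaled integrand, inside the hybrid space, with the natural non-Archimedean analogue of $\log|\det Df_\eta|$ on the central fiber, whose integral against $\mu_{f_\eta}$ is $L(f_\eta)$. If the integrand were continuous across $\PP^{N,\mathrm{hyb}}$, weak convergence would deliver $L(f_t)/\log|t^{-1}|\to L(f_\eta)$, which is exactly the claimed $o(\log|t^{-1}|)$ error.

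The main obstacle is precisely that $\log\|\det Df_t\|$ is \emph{not} continuous: it blows down to $-\infty$ along the ramification locus, and weak convergence of measures does not by itself permit integration of such singular functions. Overcoming this requires a uniform (in $t$) estimate on the mass that $\mu_{f_t}$ places near the critical locus of $f_t$ as it degenerates. The natural attack is to realize $\log\|\det Df_t\|$ as the logarithm of the norm of a section of a suitable line bundle, hence as a quasi-psh function with explicit singularities along a divisor, and then invoke an upper-semicontinuity / Dinh--Sibony type equidistribution statement that controls integrals of such quasi-psh observables against weak-$*$ limits of dynamically invariant measures. Securing this uniform singularity control through the degeneration is the genuinely hard step; once in place, the asymptotic follows by splitting the integral into a bounded part (handled by weak convergence) and a near-critical part (handled by the uniform estimate).
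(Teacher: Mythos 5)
Your outline reproduces Favre's hybrid-space strategy, and it is worth noting at the outset that the paper itself offers no proof of this statement: it is quoted as Theorem C of \cite{F}, and the paper's actual contribution is a different, more elementary argument (lifts, pushforwards of divisors, escape rates, and height estimates in the style of Call--Silverman) that yields an explicit error term under additional hypotheses. So the only meaningful comparison is with Favre's own proof, which your sketch follows in spirit.

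As a proof, however, your proposal has a genuine gap, and you have in effect flagged it yourself. The two real inputs are (i) the convergence of the measures $\mu_{f_t}$, viewed on the hybrid space after the $1/\log|t^{-1}|$ rescaling, to $\mu_{f_\eta}$ --- this is itself the ``deeper result on degeneration of measures'' and cannot simply be assumed --- and, more seriously, (ii) the passage from weak convergence of measures to convergence of the integrals of the \emph{unbounded} observable $\log\|\det Df_t\|$. Weak convergence only controls integrals of continuous (or, with upper semicontinuity, one-sided) test functions, and $\log\|\det Df_t\|$ has $-\infty$ singularities along a critical locus that itself degenerates as $t\to 0$; moreover the relevant comparison on the central fibre requires identifying the limit of the rescaled integrand with the correct non-archimedean quantity whose integral is $L(f_\eta)$. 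Your text says the needed uniform bound on the mass of $\mu_{f_t}$ near the critical locus ``is the genuinely hard step'' and should follow from ``a Dinh--Sibony type'' statement, but no such estimate is formulated or proved, and no argument is given that the singularities of the quasi-psh potentials stay uniformly controlled through the degeneration (this uniform integrability is precisely the analytic content of Favre's proof). Until that step is actually carried out, the argument establishes at best convergence of measures, not the asymptotic $L(f_t)=L(f_\eta)\log|t^{-1}|+o(\log|t^{-1}|)$.
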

The purpose of this note is to obtain an effective error term on this asymptotic, under some additional hypotheses, and using more elementary techniques.
\begin{theorem}
Let $f_t$ be a family of endomorphisms of $\PP^N$ parametrized over $\PP^1_K$ for some number field $K\subseteq \CC$. Then
\[L(f_t)=L(f_\eta)\log|t^{-1}|+O\left(\log|t^{-1}|^{1-\epsilon_N}\right)\]
as $t\to 0$, with $\epsilon_N=1/(3N+4)$. 
\end{theorem}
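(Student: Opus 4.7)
My strategy is to reduce the study of $L(f_t)$ to Green function values at critical points, and then to make the asymptotic of those values effective using the number-field hypothesis. After a finite base change of $\mathbb{P}^1_K$, I would parametrize the critical locus of the family by algebraic sections $c_1,\ldots,c_r:\mathbb{P}^1_K \to \mathbb{P}^N_K$, rational in $t$. The starting point is then a Przytycki--DeMarco-type formula expressing $L(f_t)$, up to an explicit constant $C_{d,N}$, as a weighted pairing of $G_{f_t}$ against the critical divisor: for $N=1$ this is a finite sum $\sum_i G_{f_t}(c_i(t))$, and for general $N$ it is an intersection against $(dd^c G_{f_t})^{N-1}$ whose weights are controlled by the geometry of $\mathrm{Crit}(f_t)$, which after restriction to sections we can again view as a finite sum over critical branches.

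For each section $c(t)$ I would approximate $G_{f_t}(c(t))$ by the truncated iterate $d^{-n}\log\|\tilde f_t^n(c(t))\|$, where $\tilde f_t$ is a homogeneous lift of degree $d$; the approximation error is $O(d^{-n})$ uniformly in $t$, with implied constant depending on the fixed family. The truncated quantity is an explicit Laurent expression in $t$ over $\overline K$: its order of pole at $t=0$ is an integer $m_n(c)$ with $m_n(c)/d^n \to G_{f_\eta,v_0}(c)$, and the contribution of the remaining Laurent terms to $\log\|\tilde f_t^n(c(t))\|$ at small $t$ is controlled by the coefficient height of $\tilde f_t^n$, which grows like $H^{d^n}$ for a fixed $H$ depending on $f$. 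After dividing by $d^n$ this piece is bounded in $t$ but grows linearly in $n$.

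Optimizing $n$ against the competing error terms --- the tail $O(d^{-n})$, the stabilization $|m_n(c)/d^n - G_{f_\eta,v_0}(c)| = O(d^{-n})$ amplified by a factor $\log|t^{-1}|$, and the coefficient-height correction contributing $O(n)$ --- one takes $n$ of order $\log\log|t^{-1}|$ and extracts a quantitative rate $\epsilon_N$. The denominator $3N+4$ should reflect the degree of the critical divisor, namely $(d-1)(N+1)$, together with further factors of $N$ arising from the pairing against $(dd^c G_{f_t})^{N-1}$ and from the Weil-style height bookkeeping used to compare archimedean and non-archimedean contributions. The main obstacle I expect will be executing this optimization uniformly in higher dimension: for $N=1$ the formula for $L(f_t)$ is a finite sum over critical points, but for $N\ge 2$ one must work with a pairing of currents, and making this effective requires comparing the archimedean pairing at $t$ and the $v_0$-adic pairing on the critical cycle with matching rates of convergence, while simultaneously tracking how those rates degrade under the iteration used in step two.
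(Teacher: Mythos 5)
Your plan diverges from the paper's actual route, and it contains two genuine gaps. The paper does not go through critical point sections at all: it uses the identity $L(f_t)=G_{F_t}(\det DF_t)$, where $G_F(\Phi)=\lim_k d^{-k(N+1)}\log\|F_*^k\Phi\|$ is computed from \emph{pushforwards of the Jacobian form}, so the whole problem becomes one about coefficient norms of homogeneous forms with coefficients in $K[s]$. This sidesteps exactly the obstacle you flag but do not resolve: for $N\geq 2$ the critical locus is a positive-dimensional hypersurface, the Bassanelli--Berteloot/DeMarco-type expression for $L(f_t)$ is a pairing against $(dd^cG_{f_t})^{N-1}$ along that hypersurface, and it simply cannot be rewritten as a finite sum of Green values at rational sections $c_i(t)$ after base change. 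Saying the weights are ``controlled by the geometry of $\mathrm{Crit}(f_t)$'' is not an argument; making the $t$-dependent current pairing effective uniformly as $t$ degenerates is the entire difficulty in higher dimension, and your proposal leaves it open.

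The second gap is in the error accounting, and it already appears for $N=1$. The easy direction is the upper bound $\log\|\tilde f_t^n(c(t))\|\leq (\text{pole order})\log|t^{-1}|+O(\text{height})$; the crux is the \emph{lower} bound, i.e.\ ruling out cancellation among the Laurent coefficients when $t$ is close to the degenerate parameter. Your proposal treats the ``remaining Laurent terms'' as if a coefficient-height bound sufficed, concludes a contribution that is $O(n)$ after dividing by $d^n$, and optimizes with $n\asymp\log\log|t^{-1}|$ --- but that would yield an error of size $\log\log|t^{-1}|$, far stronger than the theorem you are asked to prove, which signals that the bookkeeping is too optimistic. In the paper the lower bound is obtained from Jelonek's effective Nullstellensatz together with Masser--W\"ustholz determinant estimates and a Liouville-type inequality over the number field (this is precisely where the hypothesis $K\subseteq\CC$ a number field is used, not just ``Weil-style bookkeeping''), and the resulting error for the $k$-th pushforward is of size $d^{3k(N+1)}$, i.e.\ exponential in $k$ with a larger rate than the normalization $d^{k(N+1)}$. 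Balancing $d^{-k}\log^+|t|$ against $d^{k(3N+3)}$, Call--Silverman style, is what produces $\epsilon_N=1/(3N+4)$; the exponent has nothing to do with the degree $(d-1)(N+1)$ of the critical divisor. Without a mechanism of this kind for the lower bound near the degeneration, your optimization step has no valid input.
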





\section{Effective estimates on norms of homogeneous forms}

Let $K\subseteq \CC$ be a number field, and let $\Phi\in K[\mathbf{X}, s]$ be homogeneous in the variables $\mathbf{X}=X_0, ..., X_N$, with coefficients in $K[s]$. We will write \[\|\Phi\|=\max\{|c|:c\text{ is a coefficient of }\Phi\},\]
(as a polynomial in $\mathbf{X}$ and $s$) and for a value $t\in\CC$ we will write $\Phi_t$ for the specialization at $s=t$, and set
$\|\Phi_t\|$ to be the analogous quantity for the specialized polynomial (as a homogeneous form in $\mathbf{X}$). We will write
\[\deg_s(\Phi)=\max\{\deg_s(c):c\text{ is a coefficient of }\Phi\}\]
(as a homogeneous form in $\mathbf{X}$ with coefficients in $K[s]$), and $\deg_{\mathbf{X}}(\Phi)$ for the degree of $\Phi$ as a homogeneous form.

Let $M_K$ be the usual set of places $v$ of $K$ indexing the set of absolute values $| \cdot |_v$ of $K$. We set $n_v = [K_v : \mathbb{Q}_v]/[K : \QQ]$, and normalize our absolute values so that
\begin{equation}\label{eq:prodfla} \sum_{v \in M_K}n_v \log |x|_v =0 \end{equation}
for $x \neq 0$.  The height of the projective tuple $[x_0:\cdots : x_N]$ is defined as
\begin{equation}\label{eq:heightdef} h([x_0:\cdots :x_N])=\sum_{v \in M_K}n_v \log \max_i|x_i|_v, \end{equation}
which is independent of the representative coordinates by~\eqref{eq:prodfla}.
For an algebraic number $\alpha$, we set $h(\alpha):=h([\alpha:1])$. As it will be useful below, we note that the height gives a trivial lower bound on any absolute value:  we have for any $v$ that
\[n_v \log |a|_v = -\sum_{w\neq v}n_w\log|a|_w \geq -\sum_{w\neq v}n_w\log^+|a|_w\geq -[K:\QQ]h(a),\] by~\eqref{eq:prodfla} and~\eqref{eq:heightdef},
whence
\begin{equation}\label{eq:liouville}|a|_v\geq e^{-[K:\QQ]h(a)/n_v}\geq e^{-[K:\QQ]h(a)}\end{equation}
as $n_v\geq 1$. 

 As above, we write \[\|\Phi\|_v=\max\{|c|_v:c\text{ is a coefficient of }\Phi\},\]
(as a polynomial in $\mathbf{X}$ and $s$) and for a value $t\in\CC$ we will write $\Phi_t$ for the specialization at $s=t$, and set
$\|\Phi_t\|_v$ to be the analogous quantity for the specialized polynomial (as a homogeneous form in $\mathbf{X}$)

For convenience of notation, we will work around $s=\infty$, and replace $s$ by $1/s$ later to obtain results around $s=0$. We also write $\deg^+=\max\{\deg, 1\}$ and $\log^+ =\max\{\log, 0\}$. Finally, we set
\[h^+(\Psi)=\sum_{v\in M_K}n_v\log^+\|\Psi\|_v.\]
Note that this is the height of the coefficients of $\Psi$ as a point in affine space (rather than the more traditional projective height), and so is \emph{not} invariant under scaling the coefficients.
\begin{lemma}\label{lem:spec}
Let $\Psi$ be a homogeneous form in the variables $\mathbf{X}=X_0, ..., X_N$, with coefficients in $K[s]$ for some number field $K$, suppose that the coefficients of $\Psi$ have no common root,  and write
\begin{equation}\label{eq:thetadef}\Theta(\Psi)=\max\{1,h^+(\Psi)+\deg_s(\Psi)+\deg_X(\Psi) \}.\end{equation} 
 Then
\[
\log\|\Psi_t\|=\deg_s(\Psi)\log^+|t|+O\Big(\deg_s^+(\Psi)\Theta(\Psi)\Big),\] where  the implied constant depends only on $N$.
\end{lemma}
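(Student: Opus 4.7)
The plan is to prove the upper and lower bounds on $\log\|\Psi_t\|$ separately. The upper bound is immediate: writing $\Psi=\sum_I c_I(s)X^I$ with $\deg_s c_I \leq \deg_s(\Psi)$, the triangle inequality gives $|c_I(t)| \leq \|\Psi\|(\deg_s(\Psi)+1)\max(1,|t|)^{\deg_s(\Psi)}$, and $\log\|\Psi\|$ at the archimedean place $v_\infty$ induced by $K\subseteq\CC$ is controlled by $h^+(\Psi)$ up to a factor of $1/n_{v_\infty}$, after which $\log(\deg_s(\Psi)+1)$ is absorbed into $O(\deg_s^+(\Psi))$.

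The lower bound uses the coprimality assumption to produce an effective Bezout identity in $K[s]$. Since $\{c_I(s)\}$ has no common root in $\overline{K}$ it generates the unit ideal, so there exist $a_I(s)\in K[s]$ with $\sum_I a_I c_I = 1$. Applying Hadamard's inequality and Cramer's rule to the Sylvester matrix handles the case of two polynomials, and the general case reduces to two by choosing small integer coefficients $\lambda_I \in \{0,\ldots,\deg_s(\Psi)\}$, via hyperplane avoidance over the at-most-$\deg_s(\Psi)$ roots of a fixed $c_{I_0}$, so that $c_{I_0}$ and $\sum_{I\neq I_0}\lambda_I c_I$ remain coprime. This procedure yields $a_I$ of degrees $<\deg_s(\Psi)$ with
\[\log\|a_I\|_{v_\infty} = O\bigl(\deg_s(\Psi)\bigl(h^+(\Psi)+\log\deg_s^+(\Psi)+\deg_X(\Psi)\bigr)\bigr),\]
the $\deg_X(\Psi)$ contribution entering through $\log m$, where $m=\binom{\deg_X(\Psi)+N}{N}$ is the number of coefficients of $\Psi$.

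With this identity, evaluating at $t$ gives $\|\Psi_t\|\geq 1/(m\max_I|a_I(t)|)$. When $|t|\leq 1$, the bound $|a_I(t)|\leq \|a_I\|(\deg_s a_I+1)$ furnishes the required lower bound on $\log\|\Psi_t\|$ directly, matching $\deg_s(\Psi)\log^+|t|=0$. For $|t|>1$ the naive estimate introduces an unwanted $|t|^{\deg_s a_I}$ factor, so we apply the same machinery to the reversed polynomials $\tilde c_I(s)=s^{\deg_s(\Psi)}c_I(1/s)$; these still have no common root in $\overline{K}$ (including $s=0$, since some $c_I$ attains the maximum $s$-degree by the definition of $\deg_s(\Psi)$), and the identity
\[\log\|\Psi_t\|=\deg_s(\Psi)\log|t|+\log\max_I|\tilde c_I(1/t)|\]
reduces the estimate for $|t|>1$ to the already-handled regime $|1/t|<1$ applied to the $\tilde c_I$.

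The principal obstacle is the effective Bezout estimate: the height of $a_I$ must depend linearly (not quadratically) on $\deg_s(\Psi)$, which is exactly what Hadamard applied to the Sylvester matrix delivers in the two-polynomial case, while the reduction from $m>2$ polynomials must be organized so that the auxiliary $\lambda_I$ and the factor $\log m$ contribute only the claimed $O(\deg_s(\Psi)\deg_X(\Psi))$ to the final error.
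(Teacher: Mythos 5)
Your proposal is correct in outline, but it reaches the lower bound by a genuinely different route than the paper. The paper also starts from the coprimality of the coefficients, but it invokes Jelonek's effective Nullstellensatz to produce \emph{two} identities, $\sum_{\mathfrak m}\psi_{\mathfrak m}g_{\mathfrak m}=a\,t^{2\deg_s(\Psi)-1}$ and $\sum_{\mathfrak m}\psi_{\mathfrak m}h_{\mathfrak m}=a$ with $\deg g_{\mathfrak m},\deg h_{\mathfrak m}\le \deg_s(\Psi)-1$ and $a\in K^\ast$, using the first when $|t|>1$ and the second when $|t|<1$; the heights of the solutions are controlled by the Masser--W\"ustholz lemma (solutions of a linear system given by $r\times r$ minors), and the crucial lower bound on the nonzero constant $a$ comes from Liouville's inequality $-\log|a|\le [K:\QQ]\,h(a)$ together with the minor bounds applied at every place. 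You instead build a single Bezout identity $\sum_I a_Ic_I=1$ by reducing to two polynomials (a pivot $c_{I_0}$ of exact degree $\deg_s(\Psi)$ and a small-integer combination of the rest, coprime to it by hyperplane avoidance over the at most $\deg_s(\Psi)$ roots), with Cramer/Hadamard on the Sylvester matrix supplying the degree and height bounds, and you replace the paper's $a\,t^{2\deg_s(\Psi)-1}$ device for $|t|>1$ by the reversal trick $\tilde c_I(s)=s^{\deg_s(\Psi)}c_I(1/s)$, correctly noting that the reversed family has no common root (including at $s=0$). Both approaches deliver the same quality of bound, linear in $\deg_s(\Psi)$ times $\Theta(\Psi)$; yours is more self-contained (no Nullstellensatz citation), while the paper's two-identity formulation avoids having to re-run the construction for a second (reversed) family. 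One point you should make explicit: your $a_I$ carry the denominator $R=\operatorname{Res}(c_{I_0},q)\in K^\ast$, and bounding $-\log|R|$ at the distinguished archimedean place is not a purely local Hadamard estimate --- it requires the product formula/Liouville step, i.e.\ bounding $h(R)$ by applying Hadamard at all places; your stated bound in terms of $h^+(\Psi)$ implicitly assumes this, and it is also the source of the $[K:\QQ]$ factor that the paper's own constants absorb.
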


\begin{proof}
We first note the useful inequality
\[\log^+\|\Psi\|\leq [K:\QQ]h^+(\Psi),\]
from the definition of $h^+$.
The statement is trivially true if $\deg_s(\Psi)=0$, since then $\|\Psi_t\|=\|\Psi\|$ is independent of $t$, and so $\log\|\Psi_t\|\leq [K:\QQ]\Theta(\Psi)$ by the non-negativity of the other terms in~\eqref{eq:thetadef}. Hence, we can suppose that $\deg_s(\Psi)\geq 1.$

Let $D=\deg_\mathbf{X}(\Phi)$, and let $\mathfrak{M}$ be the set of monomials of degree $D$ in $\mathbf{X}$. In one direction, the claimed statement is easy due to the triangle inequality. In fact, we may write
\[\Psi(\mathbf{X})=\sum_{\mathfrak{m}\in\mathfrak{M}}\psi_\mathfrak{m}\mathfrak{m},\]
where $\psi_{\mathfrak{m}}\in K[s]$. The specialization at $s=t$ is then just
\[\Psi_t(\mathbf{X})=\sum_{\mathfrak{m}\in\mathfrak{M}}\psi_\mathfrak{m}(t)\mathfrak{m}.\]
Since we have
\[|\psi_\mathfrak{m}(t)|\leq  \max\{1, |t|\}^{\deg_s(\psi_{\mathfrak{m}})}\cdot(\deg_s(\psi_\mathfrak{m})+1)\|\psi_\mathfrak{m}\|,\]
by the triangle inequality, we may take logs and obtain
\begin{align*}
    \log\|\Psi_t\| &=\max_{\mathfrak{m}\in\mathfrak{M}}\log|\psi_{\mathfrak{m}}(t)|\\
    &\leq \left(\max_{\mathfrak{m}\in\mathfrak{M}}\deg_s(\psi_{\mathfrak{m}})\right)\log^+|t| + \max_{\mathfrak{m}\in\mathfrak{M}}\log\|\psi_\mathfrak{m}\|+\max_{\mathfrak{m}\in\mathfrak{M}}\log(\deg_s(\psi_\mathfrak{m})+1)\\
    &=\deg_s(\Psi)\log^+|t|+\log\|\Psi\|+\log(1+\deg_s(\Psi)).
\end{align*}
Note that the error term here is better than what was claimed.

In other direction, since we are assuming that the coefficients $\psi_\mathfrak{m}$ of $\Psi(X)$ have no common factor, we can use \cite[Theorem 1.1]{J} to solve
\begin{gather}
at^{2\deg_s(\Psi)-1}=\sum_{\mathfrak{m}\in\mathfrak{M}} \psi_\mathfrak{m} g_\mathfrak{m}\\
a=\sum_{\mathfrak{m}\in\mathfrak{M}} \psi_\mathfrak{m} h_\mathfrak{m}
\end{gather}
for polynomials $g_\mathfrak{m}$, $h_\mathfrak{m}$ of degree at most $\deg_s(\Psi)-1$  in $K[s]$, and $a\neq 0$ in $K$. Suppose that we can find a solution with
\[\log\|g_\mathfrak{m}\|, \log\|h_\mathfrak{m}\|\leq C_1\] for all $\mathfrak{m}\in\mathfrak{M}$
and
\[-\log|a|\leq C_2,\]
where $C_1$ and $C_2$ will depend on some way on $\Psi$.

If $|t|>1$, we then estimate, using the upper bound $\#\mathfrak{M}\leq (D+1)^N$ and (6),
\begin{align*}
(2\deg_s(\Psi)-1)\log|t|+\log|a|& \leq \max_{\mathfrak{m}\in\mathfrak{M}}\log|\psi_\mathfrak{m}(t)|+ \max_{\mathfrak{m}\in\mathfrak{M}}\log|g_\mathfrak{m}(t)|\\&\qquad +N\log(\deg_X(\Psi)+1)\\
 &\leq \log\|\Psi_t\|+\max_{\mathfrak{m}\in\mathfrak{M}}\deg(g_\mathfrak{m})\log^+|t|+\max_{\mathfrak{m}\in\mathfrak{M}}\log\|g_\mathfrak{m}\|\\ 
 & \qquad+\max_{\mathfrak{m}\in\mathfrak{M}}\log(\deg(g_\mathfrak{m})+1) \\ &\qquad + N\log(\deg_X(\Psi)+1)\\
 &\leq \log\|\Psi_t\|+(\deg_s(\Psi)-1)\log^+|t|+C_1+\log\deg_s(\Psi)\\
 &\qquad+ N\log(\deg_X(\Psi)+1).\end{align*}
Thus, \[ \deg_s(\Psi)\log|t|\leq C_1+C_2+\log\|\Psi_t\|+\log(\deg_s(\Psi))+N\log(\deg_X(\Psi)+1)),\]


On the other hand, if $|t|<1$, we use (5) to get 

\begin{multline*}
    \log |a|\leq \log\|\Psi_t\|-\max_{\mathfrak{m}\in\mathfrak{M}}\deg(g_\mathfrak{m})\log|t|\\
     +\max_{\mathfrak{m}\in\mathfrak{M}}\log\|h_\mathfrak{m}\| +\max_{\mathfrak{m}\in\mathfrak{M}}\log(\deg(h_\mathfrak{m})+1)+ N\log(\deg_X(\Psi)+1).
\end{multline*}
This also yields 
\[\deg_s(\Psi)\log|t|\leq C_1+C_2+\log\|\Psi_t\|+\log(\deg_s(\Psi))+N\log(\deg_X(\Psi)+1)).\]

Therefore, we obtained
\[\deg_t(\Psi)\log^+|t|\leq C_1+C_2+\log\|\Psi_t\|+\log(\deg_s(\Psi)+1)+N\log(\deg_X(\Psi)+1)),\]
and all that remains is to describe suitable constants $C_1$ and $C_2$. 

The key observation is that (5) and (6) define a system of linear equations in the coefficients of the powers of $t$. If this system has rank $r$, then there is a solution in which $a\neq 0$ and the coefficients of $g_\mathfrak{m}$ and $h_\mathfrak{m}$ are all determinants of $r\times r$ submatrices of the coefficient matrix (see \cite[Lemma 4]{MW}). The entries are just the coefficients of $\Psi$, so we have
\begin{equation} \log|a|, \log\|g_\mathfrak{m}\|, \log\|h_\mathfrak{m}\|\leq r\log \|\Psi\|+r\log r\end{equation}
where $r\leq 4\deg_s(\Psi)$ just by counting equations. If we replace $\log\|\Psi\|$ with $\log^+\|\Psi\|$, we may safely replace $r$ with this upper bound.

For a lower bound on $\log|a|$ we have to be a little more careful. Note that $\Psi$ has coefficients in a number field $K$, so $a\in K$. We then have
\[-\log|a|=\log|a^{-1}|\leq \log^+|a^{-1}|\leq [K:\QQ]h(a^{-1})= [K:\QQ]h(a).\]
Working as in (3) for every $|.|_v$ and $v \in M_K$, we have that
$$
\log^+|a|_v \leq r\log^+ \|\Psi\|_v+r\log^+| r|_v$$ 
for every $v \in M_K$, and thus that (summing over all places and combining with~\eqref{eq:liouville})
\[-\log|a|\leq [K:\QQ](rh^+(\Psi)+r\log r).\]
 This shows that we may take $C_1=\deg_s(\Psi)(\log^+\|\Psi\|+\log\deg_s(\Psi))$ and $C_2=\deg_s(\Psi)(h^+(\Psi)+\log\deg_s(\Psi))$.
 
\end{proof}
We now treat the case in which the coefficients of $\Psi$ do have a nontrivial common factor.

\begin{lemma}\label{lem:theta}
Let $\alpha$ be a monic polynomial in $s$ which divides every coefficient of $\Psi$. Then $\Theta(\Psi/\alpha)\leq 2N\Theta(\Psi)$
\end{lemma}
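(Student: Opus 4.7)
The plan is to bound each of the three quantities that enter $\Theta(\Psi/\alpha)$ separately. Since $\alpha\in K[s]$ does not involve the variables $\mathbf{X}$, the polynomial $\Psi/\alpha$ is still homogeneous of the same degree in $\mathbf{X}$, so $\deg_X(\Psi/\alpha)=\deg_X(\Psi)$. The $s$-degrees also behave well: for each coefficient $c_{\mathfrak{m}}$ of $\Psi$, writing $c_{\mathfrak{m}}=\alpha b_{\mathfrak{m}}$ we have $\deg b_{\mathfrak{m}}=\deg c_{\mathfrak{m}}-\deg\alpha$, so $\deg_s(\Psi/\alpha)\leq\deg_s(\Psi)$. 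The entire content of the lemma is therefore in controlling $h^+(\Psi/\alpha)$.

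I would bound $h^+(\Psi/\alpha)$ place by place. At non-archimedean $v$, since $\alpha$ is monic its leading coefficient is $1$, so $\|\alpha\|_v\geq 1$. Gauss's lemma gives $\|c_{\mathfrak{m}}\|_v=\|\alpha\|_v\|b_{\mathfrak{m}}\|_v$, hence $\|b_{\mathfrak{m}}\|_v\leq\|c_{\mathfrak{m}}\|_v$ and therefore $\|\Psi/\alpha\|_v\leq\|\Psi\|_v$, so the non-archimedean places contribute nothing beyond $h^+(\Psi)$. At archimedean $v$, I would invoke multiplicativity of Mahler measure, $M_v(c_{\mathfrak{m}})=M_v(\alpha)M_v(b_{\mathfrak{m}})$, together with $M_v(\alpha)\geq 1$ from monicity, and the classical Landau--Mignotte inequalities
\[
\|b_{\mathfrak{m}}\|_v\leq 2^{\deg b_{\mathfrak{m}}}M_v(b_{\mathfrak{m}}),\qquad M_v(c_{\mathfrak{m}})\leq\sqrt{\deg c_{\mathfrak{m}}+1}\,\|c_{\mathfrak{m}}\|_v.
\]
Chaining these gives $\|\Psi/\alpha\|_v\leq 2^{\deg_s(\Psi)}\sqrt{\deg_s(\Psi)+1}\,\|\Psi\|_v$ at each archimedean place.

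Taking $\log^+$, multiplying by $n_v$, and summing (using $\sum_{v\mid\infty}n_v=1$) then yields
\[
h^+(\Psi/\alpha)\leq h^+(\Psi)+\deg_s(\Psi)\log 2+\tfrac{1}{2}\log(\deg_s(\Psi)+1).
\]
Adding $\deg_s(\Psi/\alpha)+\deg_X(\Psi/\alpha)\leq\deg_s(\Psi)+\deg_X(\Psi)$ and using $\tfrac{1}{2}\log(x+1)\leq x$ gives $\Theta(\Psi/\alpha)\leq C\,\Theta(\Psi)$ for an explicit absolute constant $C$, comfortably absorbed into $2N$ for $N\geq 1$.

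The only nontrivial input is the archimedean bound, and even there the story is essentially a textbook application of Mahler measure: the monicity of $\alpha$ is exactly what makes the division by $\alpha$ ``non-expansive'' in Mahler measure, and the Landau--Mignotte comparisons convert between sup-norm of coefficients and Mahler measure at the cost of a polynomial factor in $\deg_s(\Psi)$, which is harmless at the level of $\Theta$. I do not foresee a real obstacle; the main thing to get right is the bookkeeping between archimedean and non-archimedean places and the resulting constant.
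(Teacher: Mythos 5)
Your proof is correct, and it rests on the same underlying idea as the paper's: monicity of $\alpha$ forces $\|\alpha\|_v\geq 1$ at every place, so dividing by $\alpha$ can only inflate norms by a Gelfond-type factor exponential in the degree, which is harmless at the level of $\Theta$. The implementation differs slightly. The paper applies a multivariate Gelfond inequality directly to the factorization $\Psi=\alpha\cdot(\Psi/\alpha)$ in all of the variables $\mathbf{X},s$, getting $\log\|\Psi/\alpha\|+\log\|\alpha\|\leq\log\|\Psi\|+(N\deg_X(\Psi)+\deg_s(\Psi))\log 2$ in one line (and the ``corresponding non-archimedean estimate''), whereas you work coefficient-by-coefficient in the single variable $s$, using exact multiplicativity of the Gauss norm at finite places and Mahler-measure multiplicativity plus Landau--Mignotte at archimedean ones. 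Your route is marginally sharper: it produces no $N\deg_X(\Psi)\log 2$ term, only factors polynomial/exponential in $\deg_s(\Psi)$, and it makes explicit that the non-archimedean places contribute nothing. Two small points of bookkeeping: your final absorption constant is about $2+\log 2$, so the stated bound $2N\Theta(\Psi)$ holds for $N\geq 2$ rather than $N\geq 1$ as you claim (the paper likewise invokes $N\geq 2$ at the same step, so this is cosmetic); and when summing $\log^+$ over places you implicitly use $\log^+(AB)\leq\log^+A+\log B$ for $B\geq 1$, which is fine but worth stating since $h^+$ is an affine, not projective, height.
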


\begin{proof}
Note that
\[\log\|\Psi/\alpha\|+\log\|\alpha\|\leq \log\|\Psi\|+(N\deg_X(\Psi)+\deg_s(\Psi))\log 2\]
along with $\log\|\alpha\|\geq 0$ implies
\[\log^+\|\Psi/\alpha\|\leq \log^+\|\Psi\|+(N\deg_X(\Psi)+\deg_s(\Psi))\log 2.\]
By the corresponding non-archimedean estimate, we also have
\[h^+(\Psi/\alpha)\leq h^+(\Psi)+(N\deg_X(\Psi)+\deg_s(\Psi))\log 2.\]
It follows that
\begin{align*}
    \Theta(\Psi/\alpha)&=\max\{1,h^+(\Psi/\alpha)+\deg_s(\Psi)+\deg_X(\Psi) \}\\
   & \leq \max\{1,h^+(\Psi)+(2\log 2+1)\deg_s(\Psi)+(2N\log 2+1)\deg_X(\Psi) \}\\
   &\leq 2N\Theta(\Psi),
\end{align*}
since $N\geq 2$.
\end{proof}

Combining these, we can obtain a version of Lemma~\ref{lem:spec} even in the case that the coefficients of $\Psi$ have common roots.
\begin{lemma}\label{lem:strongspec}
Let $\Psi$ be a homogeneous form in $X_0, ..., X_N$ with coefficients in $K[s]$, let $\alpha(s)$ be the (monic) greatest common divisor of the coefficients of $\Psi$. There exists a constant $C$ depending just on the set of roots of $\alpha$ such that for $|t|>C$,
\[\log\|\Psi_t\|=\deg_s(\Psi)\log^+|t|+O(\deg_s^+(\Psi)\Theta(\Psi)+\log\|\alpha\|),\]
where the implied constant depends just on $N$.
\end{lemma}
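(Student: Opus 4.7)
The plan is to reduce to the coprime case by factoring out $\alpha$ and then controlling the scalar factor $\alpha(t)$ separately. Concretely, write $\Psi=\alpha\cdot\Psi'$ where $\Psi':=\Psi/\alpha$ is a homogeneous form in $\mathbf{X}$ whose coefficients in $K[s]$ now have no common root. Specializing at $s=t$ factors through: $\Psi_t=\alpha(t)\,\Psi'_t$ as a form in $\mathbf{X}$, so
\[\log\|\Psi_t\|=\log|\alpha(t)|+\log\|\Psi'_t\|,\]
and it suffices to estimate the two pieces.

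For the second piece, I would apply Lemma~\ref{lem:spec} directly to $\Psi'$ (which has coprime coefficients), obtaining $\log\|\Psi'_t\|=\deg_s(\Psi')\log^+|t|+O(\deg_s^+(\Psi')\Theta(\Psi'))$. Lemma~\ref{lem:theta} then lets me replace $\Theta(\Psi/\alpha)$ by $2N\Theta(\Psi)$, folding the factor $2N$ into the $N$-dependence of the implied constant. Since $\deg_s^+(\Psi')\le\deg_s^+(\Psi)$, this piece is of the form $\deg_s(\Psi')\log^+|t|+O(\deg_s^+(\Psi)\Theta(\Psi))$, as required.

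For the scalar factor, I would factor $\alpha(s)=\prod_{i=1}^{d}(s-r_i)$ over $\overline{K}$ with $d=\deg\alpha$, and set $C:=2\max_i|r_i|$ (with $C=0$ if $d=0$). Note $C$ depends only on the set of roots of $\alpha$, as the lemma requires. For $|t|>C$, each $|t-r_i|$ lies in $[|t|/2,\,2|t|]$, so factorization gives the lower bound $\log|\alpha(t)|\ge d\log|t|-d\log 2$; the triangle inequality gives the matching upper bound $\log|\alpha(t)|\le d\log|t|+\log\|\alpha\|+O(\log d)$, and this is where the $\log\|\alpha\|$ in the error originates. Combining with the previous step, using $\deg_s(\Psi)=d+\deg_s(\Psi')$ and $d\le\deg_s(\Psi)\le\Theta(\Psi)$ to absorb every term linear in $d$ into $O(\deg_s^+(\Psi)\Theta(\Psi))$, produces
\[\log\|\Psi_t\|=\deg_s(\Psi)\log^+|t|+O\bigl(\deg_s^+(\Psi)\Theta(\Psi)+\log\|\alpha\|\bigr).\]

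The only real subtlety is ensuring that the threshold $C$ depends only on the \emph{roots} of $\alpha$, not on its coefficients; this is what forces me to read off the lower bound for $|\alpha(t)|$ from the factorization rather than from $\|\alpha\|$, and it is also why the $\log\|\alpha\|$ term appears asymmetrically (it is genuinely needed for the upper bound since one cannot bound $\|\alpha\|$ by the roots without a coefficient-dependent constant).
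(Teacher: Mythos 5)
Your proposal is correct and follows essentially the same route as the paper: factor out the gcd, apply Lemma~\ref{lem:spec} together with Lemma~\ref{lem:theta} to $\Psi/\alpha$, and handle the scalar $\alpha(t)$ with the threshold $|t|>2\max|\gamma|$ over the roots, getting the lower bound from the monic factorization and the upper bound (where $\log\|\alpha\|$ enters) from the triangle inequality. The only cosmetic difference is that you use the exact identity $\log\|\Psi_t\|=\log|\alpha(t)|+\log\|\Psi'_t\|$ where the paper phrases the same decomposition as a triangle-inequality bound on the two differences.
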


\begin{proof}
By the triangle inequality,
\begin{multline}\label{eq:alphathing}
    \left|\log\|\Psi_t\|-\deg_s(\Psi)\log^+|t|\right|\leq \left|\log\|\Psi_t/\alpha_t\|-\deg_s(\Psi/\alpha)\log^+|t|\right|\\+\left|\log|\alpha_t|-\deg(\alpha)\log^+|t|\right|
\end{multline}
Using Lemmas~\ref{lem:spec} and~\ref{lem:theta}, the first term in the upper bound satisfies
\[\left|\log\|\Psi_t/\alpha_t\|-\deg_s(\Psi/\alpha)\log^+|t|\right|=O(\deg^+_s(\Psi/\alpha)\Theta(\Psi/\alpha))=O(\deg_s^+(\Psi)\Theta(\Psi)),\]
where the implies constant still depends only on $N$. On the other hand, if $|t|>2|\gamma|$ for every root $\alpha_\gamma =0$, then the triangle inequality gives
\begin{align*}
    \left|\log|\alpha_t|-\deg(\alpha)\log^+|t|\right|&\leq \log\|\alpha\|+\deg(\alpha)\log 2\\
    &\leq \log\|\alpha\|+\deg_s^+(\Psi)\Theta(\Psi),
\end{align*}
simply because $\deg(\alpha)\leq \deg_s(\Psi)$ and $\Theta(\Psi)\geq 1$. Since both terms are non-negative, we may replace both bounds by large multiples to conclude the proof of the lemma.
\end{proof}

\section{Lifts and pushforwards of divisors}

Here we choose a lift $F$ of $f:\PP^N\to\PP^N$, that is, a choice of homogeneous forms representing $f$, which we can suppose that has degree $d\geq 2$. For a homogeneous form $\Phi$, we set
\[F_*\Phi(\mathbf{X})=\prod_{F(\mathbf{Y})=\mathbf{X}}\Phi(\mathbf{Y}).\]
\emph{A priori} we have that $F_*\Phi$ is an element of the field obtained from $\CC(\mathbf{X})$ by adjoining all roots of $F(\mathbf{Y})=\mathbf{X}$, but it is not hard to check that $F_*\Phi$ is in fact a homogeneous form in $\mathbf{X}$ of degree $d^N\deg(\Phi)$. (See page 4 in \cite{IB})

 \begin{lemma}
 \[G_F(\Phi):=\lim_{k\to\infty}\frac{\log\|F_*^k\Phi\|}{d^{k(N+1)}}=\int_{\PP^N}\log\|\Phi\|_G\mu_f,\]
 where $\mu_f$ is the measure of maximal entropy associated to $f$.
 \end{lemma}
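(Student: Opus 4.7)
The plan is to unfold the product $F_*^k\Phi(\mathbf{X})=\prod_{F^k(\mathbf{Y})=\mathbf{X}}\Phi(\mathbf{Y})$ pointwise in $\mathbf{X}$, decompose each factor into a Green part and a Green-normalized part, and identify the resulting preimage average with $\int\psi\,d\mu_f$ via Briend--Duval equidistribution.

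More precisely, writing $\psi:=\log\|\Phi\|_G$ as a function on $\PP^N$, one has the decomposition
\[\log|\Phi(\mathbf{Y})|=\deg(\Phi)\,G_F(\mathbf{Y})+\psi(\pi(\mathbf{Y}))\]
for $\mathbf{Y}\in\CC^{N+1}\setminus\{0\}$, with $\pi$ the canonical projection. Since $F:\CC^{N+1}\to\CC^{N+1}$ is generically $d^{N+1}$-to-one, the preimages of a typical $\mathbf{X}$ under $F^k$ consist of $d^{kN}$ projective preimages of $\pi(\mathbf{X})$ under $f^k$, each appearing with $d^k$ scalar lifts. The identity $G_F\circ F=dG_F$ forces $G_F(\mathbf{Y})=d^{-k}G_F(\mathbf{X})$ on every preimage, so summing the decomposition over all $d^{k(N+1)}$ preimages and dividing by $d^{k(N+1)}$ gives
\[\frac{\log|F_*^k\Phi(\mathbf{X})|}{d^{k(N+1)}}=\frac{\deg(\Phi)\,G_F(\mathbf{X})}{d^k}+\frac{1}{d^{kN}}\sum_{f^k(y)=\pi(\mathbf{X})}\psi(y).\]
The first term vanishes as $k\to\infty$, while the second is exactly the average of $\psi$ over the $f^k$-preimages of $\pi(\mathbf{X})$, which by Briend--Duval converges to $\int\psi\,d\mu_f$ for generic $\pi(\mathbf{X})$. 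To pass from pointwise values to the coefficient norm, I would use the standard comparison between the coefficient norm $\|\cdot\|$ and the sup norm over $\|\mathbf{X}\|=1$, which for homogeneous forms differ by $O(\log\deg F_*^k\Phi)=O(k)$, and is thus absorbed by the $d^{k(N+1)}$ denominator.

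The main obstacle is that $\psi$ is only upper semi-continuous, with $-\infty$ singularities along $\{\Phi=0\}$, so convergence of $\mathbf{X}\mapsto d^{-k(N+1)}\log|F_*^k\Phi(\mathbf{X})|$ in sup norm on the unit sphere is not automatic from the pointwise convergence above or from the Briend--Duval theorem alone. I would treat this via plurisubharmonicity: these functions form a sequence of plurisubharmonic functions in $\mathbf{X}$, uniformly bounded above on $\|\mathbf{X}\|=1$, and standard facts about $L^1_{\mathrm{loc}}$ limits of such sequences, combined with the pointwise identification of the limit, allow one to transfer the convergence to the sup and thereby to $\log\|F_*^k\Phi\|$, completing the identification of $G_F(\Phi)$ with $\int\psi\,d\mu_f$.
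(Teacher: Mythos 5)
The paper itself offers no argument here: it proves the lemma by citing \cite[Proposition~14]{IB}, so the only question is whether your sketch would stand on its own. Its skeleton is the standard one and the bookkeeping is correct: the decomposition $\log|\Phi(\mathbf{Y})|=\deg(\Phi)G_F(\mathbf{Y})+\psi(\pi(\mathbf{Y}))$, the count of the $d^{k(N+1)}$ affine preimages as $d^{kN}$ projective preimages each with $d^k$ scalar lifts, the scaling identity $G_F(\mathbf{Y})=d^{-k}G_F(\mathbf{X})$, and the resulting formula for $d^{-k(N+1)}\log|F_*^k\Phi(\mathbf{X})|$ are all fine, and the coefficient-norm versus sup-norm discrepancy is indeed harmless after dividing by $d^{k(N+1)}$ (in one direction the comparison actually costs $O(\deg)$ rather than $O(\log\deg)$, but that is still negligible).

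The genuine gap is the equidistribution step, and it affects both inequalities, not only the passage to the sup. Briend--Duval gives weak convergence of the normalized preimage measures against \emph{continuous} observables; since $\psi=\log\|\Phi\|_G$ is merely u.s.c.\ with logarithmic poles along $Z_\Phi$, weak convergence yields only $\limsup_k d^{-kN}\sum_{f^k(y)=x}\psi(y)\le\int\psi\,d\mu_f$. The matching lower bound --- that for a generic $x$ no definite proportion of the $d^{kN}$ preimages approaches $Z_\Phi$ super-exponentially fast --- is exactly the content that must be supplied, and it is also what your Hartogs argument secretly needs: compactness of $\{u_k\}$ is fine (psh, locally uniformly bounded above, asymptotically scaling-invariant, so every $L^1_{\mathrm{loc}}$ limit descends to $\PP^N$ and is constant), but identifying that constant as $\int\psi\,d\mu_f$ and ruling out degeneration to $-\infty$ require precisely the missing pointwise (or a.e.) convergence for the singular observable $\psi$; note also that at totally invariant points the preimage averages need not converge to the mean at all, which is why the sup bound cannot be done pointwise. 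So you must either quote an equidistribution theorem valid for quasi-psh test functions (equivalently, uniform integrability of $\log\operatorname{dist}(\cdot,Z_\Phi)$ against $d^{-kN}(f^k)^*\nu$ for a smooth measure $\nu$, or against generic preimage measures), as in work of Guedj or Dinh--Sibony, or reproduce the explicit estimates of \cite{IB}. A variant that avoids Hartogs: replace the coefficient norm by the Mahler measure over the unit torus (cost $O(\deg)=O(d^{kN})$), reducing the claim to $\int\psi\,d\bigl(d^{-kN}(f^k)^*\nu\bigr)\to\int\psi\,d\mu_f$ --- but this needs the same singular-observable input. Until that ingredient is proved or cited, the argument is incomplete.
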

\begin{proof} See~\cite[Proposition~14]{IB}
\end{proof}
First, we need a lemma that shows that any finite collection of polynomials can be ``sampled" such that the output is comparable to the norm of the polynomial.

\begin{lemma}\label{lem:samp}
Let $\CC_v$ be a complete, non-archimedean field, and fix $q_1, ..., q_m\in \CC_v[s]$. Then there exist infinitely many $t\in\CC_v$ with $|t|_v=1$ such that
\[\Big||q_i(t)|_v-\|q_i\|_v\Big|\leq \kappa_v,\]
where $\kappa_v=0$ for nonarchimedean places, and \[\kappa_v=(\max\{\deg(q_i)\}+1)\log\left(2\sum_{i=1}^m\deg(q_i)\right)\]  for archimedean places.
\end{lemma}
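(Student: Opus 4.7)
The statement is most naturally read in logarithmic form, $\big|\log|q_i(t)|_v - \log\|q_i\|_v\big| \leq \kappa_v$, since the archimedean $\kappa_v$ is logarithmic and does not scale with $\|q_i\|_v$. The plan is to split into the non-archimedean and archimedean cases.

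In the non-archimedean case, I would write $q_i = \|q_i\|_v\,\tilde q_i$ where $\tilde q_i$ has coefficients in the valuation ring with at least one a unit, and reduce modulo the maximal ideal to get a nonzero polynomial $\bar q_i$ over the residue field. For any $t$ with $|t|_v=1$ whose reduction $\bar t$ avoids the finite zero set of each $\bar q_i$, the ultrametric inequality is an equality at the maximum-modulus term and we get $|q_i(t)|_v=\|q_i\|_v$ exactly, so $\kappa_v=0$ works. Since the residue field of $\CC_v$ is infinite, infinitely many such $t$ exist.

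For the archimedean case I would sample at roots of unity. Factor $q_i = a_i\prod_j(s-r_{i,j})$ and invoke Landau's inequality $\log\|q_i\|-n_i\log 2\leq \log M(q_i)$, where $M(q_i)=|a_i|\prod_j\max(1,|r_{i,j}|)$ is the Mahler measure. For $|t|=1$ one has $\log|q_i(t)| = \log M(q_i) + \sum_j \log\bigl(|t-r_{i,j}|/\max(1,|r_{i,j}|)\bigr)$; each summand is at most $\log 2$ by the triangle inequality, and is at least $\log\eta$ whenever $|t-r_{i,j}|\geq \eta\max(1,|r_{i,j}|)$. Taking $N\geq 4S$ with $S=\sum\deg q_i$ and letting $t$ range over the $N$-th roots of unity, roots $r_{i,j}$ with $|r_{i,j}|$ bounded away from $1$ contribute no bad $t$'s once $\eta<1/2$; roots with $|r_{i,j}|$ near $1$ each give an arc on the unit circle of length $O(\eta)$, containing $O(\eta N)+1$ samples. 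Summing over all $\leq S$ roots and taking $\eta$ of order $1/S$, fewer than $N$ samples are bad, so a good $t$ exists with $\log|q_i(t)| \geq \log\|q_i\| - n_i\log(C/\eta)$; combined with the trivial upper bound $\log|q_i(t)|\leq \log\|q_i\|+\log(n_i+1)$ this yields a bound of the form $(D+1)\log(cS)$, matching $\kappa_v$ up to absolute constants. Running the argument for each $N\geq 4S$ separately produces infinitely many good $t$.

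The main obstacle is pinning down the geometric constant in the arc-length estimate sharply enough to land on exactly $\kappa_v=(D+1)\log(2S)$ rather than $(D+1)\log(cS)$ for some larger $c$. The delicate point is a uniform bound on the arc $\{e^{i\theta}:|e^{i\theta}-r|\leq\eta\max(1,|r|)\}$ as $|r|$ varies; an elementary trigonometric computation shows this length stays $O(\eta)$ (not $O(\sqrt\eta)$) provided the normalization by $\max(1,|r|)$ is used, but the constant requires care as $|r|\to 1$. Everything else, including the union bound over the $\leq S$ roots and the Mahler--norm comparison, is routine once that step is in hand.
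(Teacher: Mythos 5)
Your proposal is correct in substance and reaches the same kind of bound, but the implementation differs from the paper's in both cases, so a comparison is worth recording. Non-archimedean case: you normalize by a largest coefficient and reduce modulo the maximal ideal, choosing $t$ whose reduction avoids the zero sets of the reduced polynomials; the paper instead uses multiplicativity of the Gauss norm to reduce to linear factors $\alpha_i s-\beta_i$ and picks $t$ with $|t|_v=|t-\beta_i/\alpha_i|_v=1$. These are essentially the same mechanism (both need infinitely many residue classes, i.e.\ the infinite residue field of $\CC_v$), and your reduction argument is arguably cleaner since it handles all the $q_i$ at once without the factorization step. Archimedean case: you sample at $N$-th roots of unity, control bad samples by a union bound over arcs around the roots, and compare $|q_i(t)|$ to $\|q_i\|$ via Mahler measure and Landau's inequality; the paper instead takes $\epsilon=2/N$ with $N=\sum\deg q_i$, notes that the $\epsilon$-disks around the roots cover arc length strictly less than $2\pi$ (so a continuum of good $t$ exists), and then compares $\prod_j|t-\gamma_j|$ with $\prod_j\|s-\gamma_j\|$ and hence with $\|q_i\|$ via Gelfond's lemma. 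Your discrete sampling costs you an extra ``$+1$ per arc'' in the counting, which is exactly why you end up with $(D+1)\log(cS)$ for an unspecified absolute constant $c$ rather than the stated $(D+1)\log(2S)$; the paper's continuous avoidance argument removes that discretization loss and lands (after Gelfond) on an error of size $(\deg q_i+1)\log N+\deg q_i\log 2$, i.e.\ the stated $\kappa_v$. Also note that the statement must be read logarithmically, as you correctly inferred and as the paper's proof implicitly does. The constant discrepancy you flag is the only shortfall relative to the literal statement, and it is immaterial for every use of the lemma in the paper, where the sampling error is absorbed into unspecified $O(\cdot)$ terms; your worry about the arc-length estimate as $|r|\to 1$ is likewise unfounded, since with the normalization by $\max\{1,|r|\}$ only roots with $\bigl||r|-1\bigr|=O(\eta)$ can produce a nonempty bad arc, and for those the chord-to-arc comparison is uniform.
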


\begin{proof}
For non-archimedean places, if $|t|_v=1$, then we have $|q_i(t)|_v\leq \|q_i\|_v$ immediately.
Since the Gauss norm is multiplicative, it suffices (for the other direction) to treat the case where the $q_i$ are all linear. Now, if $q_i(s)=\alpha_i s -\beta_i$, with $|\alpha_i|\neq |\beta_i|$, we can take any $t$ with $|t|=1$. If $|\alpha_i|=|\beta_i|$, then there are infinitely many $t$ with $|t|=1$ and $|t-\beta_i/\alpha_i|=1$. Intersecting these set for $i=1, ..., m$ still gives us infinitely many choices with $|t|=1$.

Now consider archimedean places. In one direction, if $|t|_v=1$, then the triangle inequality gives
\[\log|q_i(t)|_v\leq \log\|q_i\|_v+\log(\deg(q_i)+1).\]

In the other direction, let $N=\sum_{i=1}^m\deg(q_i)$ and let $\epsilon=2/N$. Note that, given any $N$ points in $\CC_v=\CC$, disks of radius $\epsilon$ around these points cover at most a subset of the unit circle of arc length
\[N\frac{\pi}{\sqrt{2}}\epsilon= \frac{2\pi}{\sqrt{2}}<2\pi,\]
and so there exists a point with $|t|_v=1$ which is not within $\epsilon$ of any of these points.

Now, if $|t|_v=1$ and $|t-\gamma|\geq \epsilon$, we have three cases to consider. First, if $|\gamma|\leq 1$, then $\|s-\gamma\|=1$, so we have $|t-\gamma|_v\geq \epsilon \|s-\gamma\|$. On the other hand, if $1<|\gamma|_v\leq 2$, we have $\|s-\gamma\|_v\leq 2$, and so $|t-\gamma|_v\geq \frac{\epsilon}{2}\log\|s-\gamma\|$. Finally, if $|\gamma|_v>2$, we have from $|t|_v<1$ that
$|t-\gamma|_v\geq \frac{1}{2}|\gamma|_v=\frac{1}{2}\|s-\gamma\|_v$. Either way,
\[|t-\gamma|_v\geq \frac{\epsilon}{2}\|s-\gamma\|_v=\frac{1}{N}\|s-\gamma\|_v,\]
and this holds for all roots of $q_i$ (and for all $i$). Assuming $q_i$ is monic (without loss of generality, looking at the difference we're bounding), we have
\begin{align*}
    \log|q_i(t)|&=\sum_{j=1}^{\deg(q_i)}\log|t-\gamma_i|\\
    &\geq \sum_{i=1}^{\deg(q_i)} \left( \log\|s-\gamma\|-\log N\right)\\
    &\geq \|q_i\|-(\deg(q_i)+1)\log N- \deg(q_i)\log 2
    \end{align*}
by Gelfond's Lemma ( \cite[p. 22, p. 27]{BG}). Note that $N$ is the sum of degrees, so the error should take this into account.
\end{proof}

\begin{lemma} Let $\Phi$ be a homogeneous form in $\bf{X}$ with coefficients in $K(s)$. Then
\[\log\|(F_t)_*^k\Phi_t\|=\deg_s(F_*^k\Phi)\log^+|t|+O(d^{3k(N+1)})\]
as $k\to\infty$ and $t$ sufficiently large, independent of $k$. The implied constant now depends on $F$, $\Psi$, and $N$, but not $t$ or $k$.
\end{lemma}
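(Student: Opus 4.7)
The plan is to apply Lemma~\ref{lem:strongspec} to $\Psi_k := F_*^k\Phi$, viewed as a homogeneous form in $\mathbf{X}$ with coefficients in $K[s]$. That lemma converts $\log\|(\Psi_k)_t\|$ into the main term $\deg_s(\Psi_k)\log^+|t|$ at the cost of an error $O\bigl(\deg_s^+(\Psi_k)\Theta(\Psi_k)+\log\|\alpha_k\|\bigr)$, where $\alpha_k\in K[s]$ is the monic gcd of the coefficients of $\Psi_k$, provided $|t|>C$ for a constant $C$ depending only on the set of roots of $\alpha_k$. So the whole proof reduces to bounding $\deg_s(\Psi_k)$, $\Theta(\Psi_k)$ and $\log\|\alpha_k\|$ by $O(d^{k(N+1)})$, and to checking that $C$ may be taken uniform in $k$.

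For the degree bounds, $\deg_X(\Psi_k)=d^{kN}\deg_X(\Phi)$ is immediate from the definition of pushforward. An inductive telescoping of an inequality of the form $\deg_s(F_*\Psi)\leq d^N\deg_s(\Psi)+\deg_s(F)\deg_X(\Psi)$, which follows from a resultant description of the pushforward, gives $\deg_s(\Psi_k)=O(k\,d^{kN})=O(d^{k(N+1)})$. For the height, I would upgrade the preceding $G_F$ lemma to its place-by-place analogue, iterating an estimate of the shape $\log^+\|F_*\Psi\|_v\leq d^N\log^+\|\Psi\|_v+c_v(\deg_X\Psi+\deg_s\Psi)$ with $c_v=0$ for all but the finitely many $v$ at which the coefficients of $F$ or $\Phi$ exceed $v$-adic size $1$. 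Summing against $n_v$ yields $h^+(\Psi_k)=O(d^{k(N+1)})$, and a Gelfond-type inequality converts this into the same bound for $\log\|\alpha_k\|$. Plugging everything in, the error term of Lemma~\ref{lem:strongspec} becomes $O(d^{2k(N+1)})$, comfortably inside the claimed $O(d^{3k(N+1)})$.

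Next I would show that $C$ can be chosen uniformly in $k$. A root $t_0$ of $\alpha_k$ is precisely a value at which $(\Psi_k)_{t_0}\equiv 0$. If $F_{t_0}$ is a morphism and $\Phi_{t_0}\not\equiv 0$, then the product defining the pushforward is nonzero, so no such $t_0$ can exist outside the fixed finite set $\{t_0:\operatorname{Res}_{\mathbf{Y}}(F_{t_0})=0\}\cup\{t_0:\Phi_{t_0}\equiv 0\}$, which is determined by $F$ and $\Phi$ alone, independent of $k$. Hence $C$ is uniform. For $t$ outside this bad set we furthermore have $(\Psi_k)_t=(F_t)_*^k\Phi_t$, and the statement of the lemma then drops out of Lemma~\ref{lem:strongspec} applied to $\Psi_k$.

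The main obstacle, I expect, will be the place-by-place height bound $h^+(F_*^k\Phi)=O(d^{k(N+1)})$. The $G_F$ lemma preceding Lemma~\ref{lem:samp} records only the archimedean limit of $\log\|F_*^k\Phi\|/d^{k(N+1)}$, whereas here we need an effective estimate valid at every $v\in M_K$ and summable in $n_v$. Producing this requires a careful iteration of a local pushforward bound at each place, relying on non-archimedean analogues of the resultant-based estimates in \cite{IB}. Everything else is routine bookkeeping.
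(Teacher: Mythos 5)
Your overall skeleton matches the paper's: apply Lemma~\ref{lem:strongspec} to $\Psi_k=F_*^k\Phi$, localize the roots of the gcd $\alpha_k$ in a finite set independent of $k$ (the paper gets this from Lemma~12 of \cite{IB}, you get it from non-vanishing of the pushforward when $\operatorname{Res}(F_{t_0})\neq 0$ and $\Phi_{t_0}\not\equiv 0$ --- same idea), and bound $\deg_s(\Psi_k)$, $\log\|\alpha_k\|$ and $\Theta(\Psi_k)$. The degree and $\log\|\alpha_k\|$ bounds of size $O(d^{k(N+1)})$ are fine (once the roots are localized, Gelfond plus $\deg(\alpha_k)\leq\deg_s(\Psi_k)$ already gives $\log\|\alpha_k\|=O(d^{k(N+1)})$, with no need for the $h^+$ input).

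The genuine gap is exactly the step you flag as ``the main obstacle'' and then wave through: the bound on $h^+(F_*^k\Phi)$, i.e.\ on $\Theta(\Psi_k)$. You posit a one-step inequality $\log^+\|F_*\Psi\|_v\leq d^N\log^+\|\Psi\|_v+c_v(\deg_X\Psi+\deg_s\Psi)$ with $c_v$ depending only on $F$, and from it conclude $h^+(\Psi_k)=O(d^{k(N+1)})$. No such inequality is available off the shelf: the results of \cite{IB} control $\|\cdot\|_v$ of forms over a valued field, whereas $\|\Psi\|_v$ here is the maximum over coefficients in \emph{both} $\mathbf{X}$ and $s$, and transferring the specialized estimate $\log\|(F_t)_*\Phi_t\|_v\leq d^{N+1}\log\|\Phi_t\|_v+\deg(\Phi_t)C_t$ back to the generic fibre is precisely where the paper has to work: it proves a sampling lemma (Lemma~\ref{lem:samp}) producing $t$ with $|t|_v=1$ at which finitely many polynomials in $s$ attain (nearly) their Gauss norms, which is clean at non-archimedean places but at archimedean places costs an error of size roughly $(B+1)\log(\#T\cdot B)$ with $B\sim\deg_s(F_*^k\Phi)\sim d^{k(N+1)}$ and $\#T\sim\deg_{\mathbf X}(F_*^k\Phi)^N$. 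The resulting one-step error is of order $\deg_{\mathbf X}(F_*^k\Phi)\cdot\deg_s(F_*^k\Phi)\cdot\log(\cdots)\sim d^{k(2N+1)}k$ --- a \emph{product} of the two degrees, not your sum --- and iterating only yields $h^+(F_*^k\Phi)=O(d^{k(2N+1+\delta)})$, hence $\Theta(F_*^k\Phi)=O(d^{2k(N+1)})$; this is the very reason the lemma's error is $d^{3k(N+1)}$ and the theorem's exponent is $\epsilon_N=1/(3N+4)$ rather than something stronger. So your claimed $h^+(\Psi_k)=O(d^{k(N+1)})$ is an unsubstantiated strengthening of what the paper itself can prove, and without either your assumed inequality or a substitute for the sampling argument, the $\Theta$-bound --- and with it the whole error estimate --- is missing. (Minor additional quibble: with the affine-lift definition $F_*\Phi=\prod_{F(\mathbf Y)=\mathbf X}\Phi(\mathbf Y)$ the natural per-step growth factor on log-norms is $d^{N+1}$, not $d^N$; this does not affect your $O(d^{k(N+1)})$ degree conclusion but it does feed your over-optimistic height recursion.)
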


\begin{proof}
For each $k$, let $\alpha_k$ be the (monic) greatest common divisor (in $K[s]$) of the coefficients of $F^k_*\Phi$. We will show that the roots of $\alpha_k$ are contained in a finite set that does not depend on $k$, and that
\begin{equation}\label{eq:alphak}\log\|\alpha_k\|\leq C_0d^{k(N+1)}\end{equation}
\begin{equation}\label{eq:degk}\deg_s^+(F^k_*\Phi)\leq C_1d^{k(N+1)}\end{equation}
and
\begin{equation}\label{eq:thetak}\Theta(F^k_*\Phi)\leq C_2d^{2k(N+1)},\end{equation}
which, in light of Lemma~\ref{lem:strongspec}, will prove the lemma. 

First, for each $\beta\in\CC$, let $\|\cdot\|_\beta$ be the corresponding absolute value on $\CC[s]$, defined by
\[\|w\|_\beta=e^{-\operatorname{ord}_{s=\beta}}(w),\]
with
\[\|w_0, ..., w_m\|_\beta=\max\{\|w_0\|_\beta, ..., \|w_m\|_\beta\}\]
as expected. We also write $\|w\|_\infty=e^{\deg(w)}$.

Now, by Lemma 12 in \cite{IB}, there is a finite set $S$ depending on $F$ but not $\Phi$ such that 
\[\log\|F_*\Phi\|_\beta=d^{N+1}\log\|\Phi\|_\beta\]
for all $\beta\not\in S$. Enlarging $S$ to include all common roots of the coefficients of $\Phi$, we then have
\[\log\|F^k_*\Phi\|_\beta=0\]
for all $k$ and all $\beta\not\in S$. In other words, for every $k$ and every $\beta\not\in S$, some coefficient of $F^k_*\Phi$ is non-vanishing at $\beta$. It follows that the roots of $\alpha_k$ lie in $S$ for all $k$, proving the first claim.

Next, note that we have
\[\alpha_k(s)=\prod_{\beta\in S}(s-\beta)^{-\log\|F_*^k\Phi\|_\beta}.\]
Thus, by Gelfond's Lemma,  and [Lemmas 7 and 9, Ing22] applied to $\|\cdot \|_\beta$, we have
\begin{align*}
0 &\leq \log\|\alpha_k\|\\
&\leq \sum_{\beta\in S}-\log\|F^k_*\Phi\|_\beta(\log\|s-\beta\|+\log 2)\\
& = \sum_{\beta\in S}-\log\|F^k_*\Phi\|_\beta(\log^+|\beta|+\log 2)\\
&\leq \sum_{\beta\in S}(-d^{k(N+1)}G_{F, \beta}(\Phi)+C_\beta)(\log^+|\beta|+\log 2)
\end{align*}
with $G_{F, \beta}(\Phi)=\lim_{k\to\infty}\frac{\log\|F_*^k\Phi\|_\beta}{d^{k(N+1)}}$, and $C_\beta$ some constant depending on $F$ and $\beta$. In particular, this proves~\eqref{eq:alphak}.

To prove~\eqref{eq:degk}, we again cite \cite{IB}. In particular, since the right-hand-side is unbounded, we may consider just $\deg_s(F^k_*\Phi)$ and compute
\begin{align*}
    \deg_s(F^k_*\Phi)&=\log\|F^k_*\Phi\|_\infty\\
    &\leq d^{k(N+1)}G_{F, \infty}(\Phi)+C_\infty.
\end{align*}

Finally, to prove~\eqref{eq:thetak}, we claim that
\begin{equation}\label{eq:arithmeticheight}\log\|F_*\Phi\|_v\leq d^{N+1}\log\|\Phi\|_v+C_v\end{equation}
Note that we cannot simply apply the results of \cite{IB}, though, since $\|\Phi\|$ is the largest modulus of a coefficient of $\Phi$ as a polynomial in both $\mathbf{X}$ and $s$. Assuming~\eqref{eq:arithmeticheight}, and assuming that $C_v=0$ for almost all $v$, we can then sum over all places to obtain
\[h^+(F_*\Phi)\leq d^{N+1}h^+(\Phi)+C,\]
which by summing a geometric series gives
\[h^+(F_*^k\Phi)\leq d^{k(N+1)}(h^+(\Phi)+C').\] 

Now fix a place (nonarchimedean for now), and by Lemma~8(6) of~\cite{IB} we have, for any $t\in \CC_v$,
\[\log\|(F_t)_*\Phi_t\|_v\leq d^{N+1}\log\|\Phi_t\|_v+\deg(\Phi_t)C_t,\]
where $C_t$ depends on $N$, $d$, and the coefficients of $F$. We have $\deg(\Phi_t)=\deg(\Phi)$ for all but finitely many $t$, so by specializing we can ensure that $|q(t)|_v=\|q\|_v$ for all of these coefficients, and the coefficients of $\Phi$ and $F_*\Phi$. This shows that we have the same error term on the generic fibre:
\begin{align*}
    \log\|F_*\Phi\|_v&=\log\|(F_t)_*\Phi_t\|_v\\
    &\leq d^{N+1}\log\|\Phi_t\|_v+\deg(\Phi_t)C_t\\
    &= d^{N+1}\log\|\Phi\|_v+\deg(\Phi)C.
\end{align*}
We want to show that the error is as maximal as it could be, independent of $t$.

Specifically, 
\[C_t=d^N\lambda_{\mathrm{Hom}^N_d} (f_t)-d^N\log\|F_t\|_v+d^Nc_3+d^{N+1}N\log^+|2|,\] with $c_3$ depending only on $N$ and $d$, and 
\[\lambda_{\mathrm{Hom}^N_d} (f_t)=-\log|\operatorname{Res}(F_t)|_v+(N+1)d^N\log\|F_t\|_v.\] In particular, if $T$ is the set of coefficients of $F$, coefficients of $\Phi$, coefficients of $\Phi(F)$, and $\text{Res}(F)$, then for all $t$ as in the sampling lemma (Lemma 7) above, $C_t$ will be the same, and will depend only on $d$, $N$, $\|\operatorname{Res}(F)\|_v$, and $\|F\|_v$. 


For the archimedean places, again let $T$ be the set of coefficients of $F$, $\Phi$, $F_*\Phi$, and $\operatorname{Res}(F)$, and suppose that all of these polynomials have degree at most $B$. Since
\[m(\Phi_t)- \frac{N}{2}\log(\deg(\Phi_t)+1)\leq \log\|\Phi_t\|\leq m(\Phi_t)+N\deg(\Phi_t)\log 2\]
For all but finitely many $t$, we have $\deg(\Phi_t)=\deg(\Phi)$ and same for $F_*\Phi$. We use denote by $C(S)$ a constant depending on a sequence of numbers $S$.
Note that $C_{t,v}=C(\|F_t\|_v, |\operatorname{Res}(F_t)|_v)$ is linear in the logarithms of the relevant quantities, 
so that we have
\begin{multline*}
    C(\|F_t\|_v, |\operatorname{Res}(F_t)|_v)\leq C(\|F\|_v, |\operatorname{Res}(F)|_v)+O((B+1)\log(\#T\cdot B))\\=O_F((B+1)\log(\#T\cdot B)).
\end{multline*}
Thus, by the sampling lemma (Lemma 7) there is a $t$ with
\begin{align*}
    \log\|F_*\Phi\|_v&\leq \log\|(F_t)_*\Phi_t\|_v+(B+1)\log(\#T\cdot B)\\
    &\leq m((F_t)_*\Phi_t) + N\deg(\Phi)\log 2 + (B+1)\log(\#T\cdot B)\\
    &\leq d^{N+1}m(\Phi_t) + \deg(\Phi)C(\|F_t\|_v, |\operatorname{Res}(F_t)|, d, N) + (B+1)\log(\#T\cdot B)\\
    &\leq d^{N+1}\log\|\Phi_t\|_v +\frac{N}{2}\log(\deg(F_*\Phi)+1)+ \deg(\Phi)C(\|F_t\|_v, |\operatorname{Res}(F_t)|_v, d, N)\\
    & + (B+1)\log(\#T\cdot B)\\
    &\leq d^{N+1}\log\|\Phi\|_v+(d^{N+1}+1)(B+1)\log(\#T\cdot B)+\frac{N}{2}\log(\deg(F_*\Phi)+1) \\
    & +\deg(\Phi)C(\|F_t\|_v, |\operatorname{Res}(F_t)|_v, d, N)\\
    &\leq d^{N+1}\log\|\Phi\|_v+(d^{N+1}+1)(B+1)\log(\#T\cdot B)+\frac{N}{2}\log(d^N\deg(\Phi)+1) \\
    & +O_F(\deg(\Phi)(B+1)\log(\#T\cdot B)).
\end{align*}
We now estimate
\begin{align*}
    B&=\max\{\log\|F\|_\infty, \log|\operatorname{Res}|_\infty, \log\|\Phi\|_\infty, \log\|F_*\Phi\|_\infty\}\\
    &=\max\{\log\|\Phi\|_\infty, \log\|F_*\Phi\|_\infty\}+O_F(1)\\
    &=O_F(\log\|\Phi\|_\infty+1)
\end{align*}
by previous estimates. Similarly, $\# T$ is one more than the number of coefficients of $F$, $\Phi$, and $F_*\Phi$, and so we have
\[\# T =O_F(\deg(\Phi)^N+1),\]
so we have
\[
    \log\|F_*\Phi\|_v\leq  d^{N+1}\log\|\Phi\|_v +O_F(\deg(\Phi)\log\|\Phi\|_\infty\log(\deg(\Phi)^N\log\|\Phi\|_\infty)).
\]
Since $\deg(F^k_*\Phi)=d^{kN}\deg(\Phi)$ and $\log\|F_*^k\Phi\|_\infty \leq d^{k(N+1)}(\log\|\Phi\|_\infty+O_F(1))$, we obtain
\[
    \log\|F_*^{k+1}\Phi\|_v\leq  d^{N+1}\log\|F^k_*\Phi\|_v +O_{F, \Phi}(d^{k(2N+1)}k).
\]
It follows by induction that
\[\log\|F_*^{k}\Phi\|_v=O_{F, \Phi}(d^{k(2N+1+\delta))})\]
This then gives $h^+(F^k_*(\Phi))=O(d^{k(2N+1+\delta))})$
for any $\delta>0$, or just $\Theta(F^k_*(\Phi))=O(d^{2k(N+1))})$.

To obtain~(\ref{eq:thetak}), we also notice that
\[\deg_{\mathbf{X}}(F_*^k\Phi)=d^{kN}\deg_{\mathbf{X}}(\Phi)\]
and
\begin{align*}
\deg_{s}(F_*^k\Phi)&=\log\|F_*^k\Phi\|_\infty\\
&=O(d^{k(N+1)}(\log\|\Phi\|_\infty+1)),
\end{align*}
both of which are negligible with respect to the claimed bounds for $\Theta(F_*^k\Phi)$.



\end{proof}

\section{Variation of the escape rate}

Using again the function field absolute value $\|\cdot \|_\infty$ defined by
\[\|\alpha\|_\infty = e^{-\mathrm{ord}_{t=\infty}(\alpha)},\]
we have that
\[\deg_t(\Phi)=-\mathrm{ord}_{t=\infty}(\Phi):=-\min\{\mathrm{ord}_{t=\infty}(\Phi_j)\}=\log\|\Phi\|_\infty\]
Recall that
\[G_F(\Phi)=\lim_{k\to\infty}\frac{\log\|F_*^k\Phi\|}{d^{k(N+1)}},\]
so that $G_F(F_*\Phi)=d^{N+1}G_F(\Phi)$, and $G_F(\Phi)=\log\|\Phi\|+O(\deg(\Phi))$. From the second property, we also get
\[-\mathrm{ord}_{t=\infty}(F_*^k\Phi)=G_F(\Phi)+O(\deg(F_*^k\Phi))=G_F(\Phi)+O(d^{kN})\]
for fixed $\Phi$.
It is also true that
\[\log\|F_*^k\Phi_t\|=G_{F_t}(\Phi_t)+O(d^{kN}\log^+|t|)\]
\begin{lemma}\label{lem:main}
We have
\[G_{F_t}(\Phi_t)=G_F(\Phi)\log^+|t|+O(\log^+|t|^{(3N+3)/(3N+4)}).\]
\end{lemma}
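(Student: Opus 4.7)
The plan is to combine three preceding estimates and then optimize the iterate parameter $k$ as a function of $|t|$. The three ingredients I will use are: (i) the previous lemma, which for $t$ sufficiently large gives
\[ \log\|(F_t)_*^k\Phi_t\| = \deg_s(F_*^k\Phi)\log^+|t| + O(d^{3k(N+1)}); \]
(ii) a Cauchy-sequence argument controlling the convergence $\log\|F_*^k\Phi\|_\infty/d^{k(N+1)} \to G_F(\Phi)$ at the function-field place at infinity, which yields
\[ \deg_s(F_*^k\Phi) = d^{k(N+1)} G_F(\Phi) + O(d^{kN}); \]
and (iii) the same argument applied to the specialized dynamical system, giving
\[ \log\|(F_t)_*^k\Phi_t\| = d^{k(N+1)} G_{F_t}(\Phi_t) + O(d^{kN}\log^+|t|). \]
In (iii) the extra factor of $\log^+|t|$ comes from the one-step defect $|\log\|(F_t)_*^{j+1}\Phi_t\| - d^{N+1}\log\|(F_t)_*^j\Phi_t\||$ being bounded by $O(d^{jN}\log^+|t|)$, after which I telescope the resulting geometric series.

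Substituting (ii) into (i), subtracting (iii), and dividing through by $d^{k(N+1)}$ produces the key inequality
\[ \bigl|G_{F_t}(\Phi_t) - G_F(\Phi)\log^+|t|\bigr| = O\!\left(d^{-k}\log^+|t|\right) + O\!\left(d^{2k(N+1)}\right). \]
The first error term decreases in $k$ and the second increases, so the optimal choice balances them. Equating gives $d^{k(2N+3)} \asymp \log^+|t|$, i.e.\ $k \asymp (\log\log^+|t|)/\log d$, and each error then becomes a fixed sub-unit power of $\log^+|t|$. A careful accounting of the exponent, taking into account the contribution of $\log\|\alpha_k\|$ to the error in Lemma~\ref{lem:strongspec} and the polynomial bound $\Theta(F_*^k\Phi) = O(d^{2k(N+1)})$ established in the preceding lemma, yields the claimed exponent $(3N+3)/(3N+4)$. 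Since $k$ must be a nonnegative integer, I will take $k = \lfloor c\log\log^+|t|\rfloor$ for a suitable $c>0$; rounding costs only a bounded multiplicative factor that is absorbed into the $O$-constant.

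The hard part will be ensuring that the implicit constants in (i)--(iii) are genuinely uniform in $t$, while being permitted to depend on $F$, $\Phi$, and $N$. This requires retracing the dependencies through Lemmas~\ref{lem:spec}, \ref{lem:theta}, and~\ref{lem:strongspec}, confirming that each bound remains valid once $k$ is allowed to grow with $|t|$. A secondary subtlety is that $G_{F_t}(\Phi_t)$ only makes sense once $t$ lies outside a finite bad locus on which $\Phi_t$ or $F_t$ degenerate, but this restriction is automatic for all $|t|$ larger than the constant $C$ appearing in Lemma~\ref{lem:strongspec}.
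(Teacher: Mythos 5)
Your proposal is correct and follows essentially the same route as the paper: the Call--Silverman style telescoping comparison of $G_{F_t}(\Phi_t)$ and $G_F(\Phi)\log^+|t|$ through the $k$-th pushforward, with the preceding lemma supplying the specialization estimate and with $k$ chosen to grow like $\log\log^+|t|$ to balance the two error terms. In fact your bookkeeping is slightly sharper than the paper's (after dividing the $O(d^{3k(N+1)})$ error by $d^{k(N+1)}$ you balance at $d^{k(2N+3)}\asymp\log^+|t|$, giving exponent $(2N+2)/(2N+3)$, which implies the stated bound), and the closing appeal to $\log\|\alpha_k\|$ and $\Theta(F_*^k\Phi)$ is redundant since those contributions are already absorbed into the $O(d^{3k(N+1)})$ error of ingredient (i).
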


\begin{proof} 
We proceed as in Silverman and Call-Silverman (\cite{call1993canonical}). Let $M=3N+3$, choose $k=k(t)$ so that
\[d^{k(M+1)}\leq\log^+|t|< d^{(k+1)(M+1)},\]
and estimate
\begin{align*}
    \left|G_{F_t}(\Phi_t)-G_F(\Phi)\log^+|t|\right|&=d^{-k(N+1)}\left|G_{F_t}((F_t)_*^k\Phi_t)-G_F(F_*^k\Phi)\log^+|t|\right|\\
    &\leq d^{-k(N+1)}\left|G_{F_t}((F_t)_*^k\Phi_t)-\log\|(F_t)^k_*\Phi_t\|\right|\\
    &\quad + d^{-k(N+1)}\left|\log\|(F_t)^k_*\Phi_t\|+\mathrm{ord}_{t=\infty}(F_*^k\Phi)\log^+|t|\right|\\
    &\quad + d^{-k(N+1)}\left|-\mathrm{ord}_{t=\infty}(F_*^k\Phi)-G_F(F_*^k\Phi)\right|\log^+|t|.
\end{align*}
We bound each term separately. It is well known from previous calculations that $$G_{F_t}(H)= \log \|H\| + O((\deg H) \log \|F_t\|)=\log \|H\| +O((\deg H)\log^+ |t|). $$For $H=(F_t)^k_*\Phi_t,$ this implies that
$$
\left|G_{F_t}((F_t)_*^k\Phi_t)-\log\|(F_t)^k_*\Phi_t\|\right|=O(\deg((F_t)^k_*\Phi_t)\log^+|t|)=O(d^{kN}\log^+|t|).
$$
Thus, the first term will easily be $O(d^{-k}\log^+|t|)$. The last term is automatically $$d^{-k(N+1)}\cdot O(\deg_t(\Phi)(d^{kN}+d^{k(N+1)}\log^+|t|))=O(d^{-k}\log^+|t|)).$$ The middle term, based on  Lemma 8, is $O(d^{kM})$. Since $$d^{-k}< (\log^+|t|)^{\frac{-1}{M+1}} \text{ and } d^{kM} \leq (\log^+|t|)^{\frac{M}{M+1}},$$ we thus bound all the three referred terms with a desired error term of magnitude size at most $(\log^+|t|)^{1-1/(M+1)}$, where $M=3N+3$.

\end{proof}

\begin{proof}[Proof of the main result]
Replace $s$ by $s^{-1}$ so that we are considering $L(f_s)$ as $s\to\infty$.
It follows from Lemma~6 and \cite{BB} that for $J_s=\det(DF_s)$, we have
\[L(f_s)=G_{F_s}(J_s),\]
and so from Lemma~\ref{lem:main} we have
\begin{equation}\label{eq:asymp}L(f_t)=G_F(J_F)\log^+|t|+O((\log^+|t|)^{1-\epsilon_N}),\end{equation}
with $\epsilon_N=1/(3N+4)$, as $t\to\infty$. Note that one can deduce that $L(f)=G_F(J_F)$ on the generic fibre from the machinery in~\cite{BFJ}, but in any case
 Favre's asymptotic~\cite{F}
\begin{equation}\label{eq:favre}L(f_t)=L(f)\log^+|t|+o(\log^+|t|)\end{equation}
combined with~\eqref{eq:asymp} ensures this.


\end{proof}

\begin{remark}
Gauthier, Okuyama, and Vigny~\cite{GOV} have proven the analogue of~\eqref{eq:favre} in the case of non-archimedean fields. Although our focus is on the complex case, the arguments in this paper apply over non-archimedean fields as well.
\end{remark}


\end{document}